\newtheorem{thm}{\bf Theorem}
\newtheorem{lem}{\bf Lemma}
\newtheorem{remark}{\bf Remark}
\newtheorem{alg}{Algorithm}
\newcommand{\norm}[1]{{ \left\| { #1 } \right\| }}
\def\h{\hspace{-0.2cm}}
\begin{document}
\title{ On the generalized shift-splitting preconditioner for saddle point problems}
\author{{ Davod Khojasteh Salkuyeh\footnote{Corresponding author}, Mohsen Masoudi and Davod Hezari}\\[2mm]
\textit{{\small Faculty of Mathematical Sciences, University of Guilan, Rasht, Iran}} \\
\textit{{\small E-mails: khojasteh@guilan.ac.ir, masoodi\_mohsen@ymail.com, d.hezari@gmail.com}\textit{}}}
\date{}
\maketitle
{\bf Abstract.} In this paper, the generalized shift-splitting preconditioner is implemented  for saddle point problems with symmetric positive definite (1,1)-block and symmetric positive semidefinite (2,2)-block. The proposed preconditioner is extracted form a stationary iterative method which is unconditionally convergent.  Moreover, a relaxed version of the proposed preconditioner is presented and some properties of the  eigenvalues distribution  of the  corresponding preconditioned matrix are studied. Finally, some numerical experiments on test problems arisen from  finite element discretization of  the Stokes problem  are given to show the effectiveness of the  preconditioners.
 \\[-3mm]

\noindent{\it  Keywords}: { Saddle point problem, preconditioner, shift-splitting, symmetric positive definite.}\\
\noindent{\it  AMS Subject Classification}: 65F10, 65F50, 65N22. \\

\pagestyle{myheadings}\markboth{D.K. Salkuyeh, M. Masoudi and D. Hezari}{Generalized shift-splitting preconditioner for saddle point problems}

\thispagestyle{empty}

\section{Introduction} \label{Sec1}

Consider the saddle point linear system
\begin{equation}\label{sdpr1}
\mathcal{A}u\equiv\left(
  \begin{array}{cc}
    A & B^T \\
   -B & C
  \end{array}
\right)
\left(
  \begin{array}{c}
    x \\
    y
  \end{array}
\right)=
\left(
  \begin{array}{c}
    f \\
    -g
  \end{array}
\right)\equiv b,
\end{equation}
where $A\in \mathbb{R}^{n \times n}$ is symmetric positive definite (SPD), $C\in \mathbb{R}^{m \times m}$ is symmetric positive semidefinite and $B\in \mathbb{R}^{m\times n}$, $m\leq n$, is of full rank. Moreover, $x,f\in \mathbb{R}^{n}$ and $y,g\in \mathbb{R}^{m}$. We also assume that the matrices $A$, $B$ and $C$ are large and sparse. According to Lemma 1.1 in \cite{Benzi-a1} the matrix $\mathcal{A}$ is nonsingular.  Such systems arise in a variety of scientific computing and engineering applications, including constrained optimization, computational fluid dynamics, mixed finite element discretization of the Navier-Stokes equations, etc. (see \cite{Axel,Benzi-a,Brezzi,Elman}). Application-based analysis can be seen in \cite{Elman1,Klawonn,Silvester}.

In the last decade, there has been intensive work on development of the effective iterative methods for solving matrix equations with different structures (see for example \cite{Bai-HSS,ding1,ding2,ding3,ding4,Saadbook}).  Benzi and Golub \cite{Benzi-a1} investigated the convergence and the preconditioning properties of the Hermitian and skew-Hermitian splitting (HSS) iterative method \cite{Bai-HSS}, when it is used for solving the saddle point problems. Bai et al. in \cite{Bai-PHSS} established the preconditioned HSS (PHSS) iterative method, which involves a single parameter, and then, Bai and Golub in \cite{Bai-AHSS} proposed its two-parameter acceleration, called the accelerated Hermitian and skew-Hermitian splitting (AHSS) iterative method; see also \cite{Bai-HSS-like}. Besides these HSS methods, Uzawa-type schemes \cite{Uzawa1,Uzawa2,Uzawa3,Uzawa4,Uzawa6} and preconditioned Krylov subspace methods, such as MINRES and GMRES incorporated with suitable preconditioners  have also been applied to solve  the saddle point problems (see \cite{wu1,wu2,wu3,wu4} and the references therein as well as \cite{prec3,prec4}). The reader is also referred to  \cite{Benzi-a} for a comprehensive survey.

To solve the  saddle point problem \eqref{sdpr1} when $C=0$, Cao et al., in \cite{SSP}, proposed the  shift-splitting  preconditioner
\[
\mathcal{P}_{SS}=\frac{1}{2}(\alpha I+ \mathcal{A})=\frac{1}{2}
\left(
  \begin{array}{cc}
    \alpha I+A & B^T \\
           -B & \alpha I \\
  \end{array}
\right),
\]
which is a skillful generalization of the idea of the shift-splitting  preconditioner  initially introduced in \cite{Bai-PSS} for solving a non-Hermitian positive definite linear system where $\alpha>0$ and $I$ is the identity matrix.
Recently, Chen and Ma in \cite{GSS} studied   the  two-parameter generalization of the preconditioner $\mathcal{P}_{SS}$, say
\[
\mathcal{P}_{GSS}=\frac{1}{2}
\left(
  \begin{array}{cc}
   \alpha I+A & B^T \\
  -B & \beta I \\
  \end{array}
\right),
\]
for solving the saddle point linear systems \eqref{sdpr1} with $C=0$, where $\alpha\geq 0$ and $\beta>0$.


In this paper, we propose a modified generalized shift-splitting (MGSS) preconditioner for the saddle point problem \eqref{sdpr1} with $C\neq0$. The MGSS preconditioner is based on a splitting of the saddle point matrix which  results in an unconditionally convergent stationary iterative method. Moreover, a relaxed version of the MGSS preconditioner is presented and  the eigenvalues distribution of the corresponding preconditioned matrix is studied.

The organization of the paper is as follows. In Section \ref{Sec2} we propose the MGSS preconditioner and its relaxed version. Section \ref{Sec3} is devoted to some numerical experiments. Finally, in Section \ref{Sec4} we present some concluding remarks.


\section{The generalized shift-splitting preconditioner}\label{Sec2}

Let $\alpha , \beta >0$. Consider the splitting  $\mathcal{A}=\mathcal{M}_{\alpha,\beta}-\mathcal{N}_{\alpha,\beta}$, where
\begin{equation}\label{MGHSSsplit}
\mathcal{M}_{\alpha,\beta}=
\frac{1}{2}
\left( {\begin{array}{*{20}{c}}
\alpha I +A & B^T\\
-B & \beta I+ C
\end{array}} \right) \quad \textrm{and} \quad
\mathcal{N}_{\alpha,\beta}=\frac{1}{2}
\left( {\begin{array}{*{20}{c}}
\alpha I -A & -B^T\\
B & \beta I- C
\end{array}} \right).
\end{equation}
This splitting leads to the following stationary iterative method (the MGSS iterative scheme)  \eqref{sdpr1}
\begin{equation}\label{MGHSSit}
\mathcal{M}_{\alpha,\beta} u^{(k+1)}=\mathcal{N}_{\alpha,\beta} u^{(k)}+b
\end{equation}
for solving the linear system \eqref{sdpr1}, where $u^{(0)}$ is an initial guess.
Therefore, the iteration matrix of the MGSS iterative method is given by $\Gamma_{\alpha,\beta} =\mathcal{M}_{\alpha,\beta}^{-1}\mathcal{N}_{\alpha,\beta}$.
In the sequel, the convergence of the proposed method is studied. It is well known that the iterative method \eqref{MGHSSit} is convergent for every initial guess $u^{(0)}$ if and only if $\rho(\Gamma_{\alpha,\beta})<1$, where $\rho(.)$ denotes the spectral radius of $\Gamma$ (see \cite{Axel}).
Let $u=(x ; y)$ be an eigenvector corresponding to the  eigenvalue $\lambda $ of $\Gamma_{\alpha,\beta}$. Then, we have
$\mathcal{N}_{\alpha,\beta}u=\lambda \mathcal{M}_{\alpha,\beta}u$ or equivalently
\begin{eqnarray}
  (\alpha I-A)x-B^Ty &\h=\h& \lambda (\alpha I+A)x+\lambda B^Ty, \label{EigEq1} \\
  Bx+(\beta I-C)y &\h=\h& -\lambda Bx+ \lambda (\beta I+C) y.  \label{EigEq2}
\end{eqnarray}


\begin{lem}\label{Lem1}
Let $\alpha,\beta>0$. If $\lambda$ is an eigenvalue of the matrix  $\Gamma_{\alpha,\beta}$, then $\lambda \neq \pm1$.
\end{lem}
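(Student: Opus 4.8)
The plan is to handle the two forbidden values separately, working at the level of the splitting rather than through the scalar eigenvalue equations, since an eigenvalue $\lambda=\pm1$ of $\Gamma_{\alpha,\beta}=\mathcal{M}_{\alpha,\beta}^{-1}\mathcal{N}_{\alpha,\beta}$ corresponds exactly to the singularity of $\mathcal{M}_{\alpha,\beta}\mp\mathcal{N}_{\alpha,\beta}$. Throughout I keep in mind that an eigenvector $u=(x;y)$ is by definition nonzero.

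First I would rule out $\lambda=1$. The defining relation $\mathcal{N}_{\alpha,\beta}u=\lambda\mathcal{M}_{\alpha,\beta}u$ with $\lambda=1$ becomes $\mathcal{N}_{\alpha,\beta}u=\mathcal{M}_{\alpha,\beta}u$, that is $(\mathcal{M}_{\alpha,\beta}-\mathcal{N}_{\alpha,\beta})u=0$. Since the splitting \eqref{MGHSSsplit} was chosen so that $\mathcal{M}_{\alpha,\beta}-\mathcal{N}_{\alpha,\beta}=\mathcal{A}$, this reads $\mathcal{A}u=0$. But $\mathcal{A}$ is nonsingular (Lemma 1.1 of \cite{Benzi-a1}), which forces $u=0$ and contradicts that $u$ is an eigenvector. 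Hence $\lambda\neq1$.

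Next I would rule out $\lambda=-1$. Here the relation gives $\mathcal{N}_{\alpha,\beta}u=-\mathcal{M}_{\alpha,\beta}u$, i.e. $(\mathcal{M}_{\alpha,\beta}+\mathcal{N}_{\alpha,\beta})u=0$. The key computation is that adding the two matrices cancels the $A$, $C$, and off-diagonal $B$-blocks, leaving
\[
\mathcal{M}_{\alpha,\beta}+\mathcal{N}_{\alpha,\beta}=\left(\begin{array}{cc}\alpha I & 0\\ 0 & \beta I\end{array}\right),
\]
which, since $\alpha,\beta>0$, is symmetric positive definite and therefore nonsingular. Consequently $(\mathcal{M}_{\alpha,\beta}+\mathcal{N}_{\alpha,\beta})u=0$ again forces $u=0$, a contradiction, so $\lambda\neq-1$.

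I do not expect any real obstacle: the entire argument rests on the single observation that $\lambda=\pm1$ is equivalent to singularity of $\mathcal{M}_{\alpha,\beta}\mp\mathcal{N}_{\alpha,\beta}$, together with the two facts that $\mathcal{M}_{\alpha,\beta}-\mathcal{N}_{\alpha,\beta}=\mathcal{A}$ is nonsingular by hypothesis and that $\mathcal{M}_{\alpha,\beta}+\mathcal{N}_{\alpha,\beta}$ collapses to the positive definite matrix $\mathrm{diag}(\alpha I,\beta I)$. A more pedestrian route would substitute $\lambda=\pm1$ directly into \eqref{EigEq1}--\eqref{EigEq2}: for $\lambda=1$ this yields $Ax+B^Ty=0$ and $-Bx+Cy=0$, again identifying $\mathcal{A}u=0$, while for $\lambda=-1$ it yields $2\alpha x=0$ and $2\beta y=0$, whence $x=y=0$; both recover the same conclusions, but the splitting-level computation is the cleaner presentation.
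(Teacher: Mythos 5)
Your proof is correct and is essentially the same argument as the paper's: the paper substitutes $\lambda=\pm1$ into \eqref{EigEq1}--\eqref{EigEq2} to get $\mathcal{A}u=0$ (for $\lambda=1$) and $2\alpha x=0$, $2\beta y=0$ (for $\lambda=-1$), which is exactly your block-level computation $\mathcal{M}_{\alpha,\beta}-\mathcal{N}_{\alpha,\beta}=\mathcal{A}$ and $\mathcal{M}_{\alpha,\beta}+\mathcal{N}_{\alpha,\beta}=\mathrm{diag}(\alpha I,\beta I)$ written componentwise. Your packaging is arguably cleaner (and quietly fixes a typo in the paper, which writes ``$2\beta x=0$'' where $2\beta y=0$ is meant), but the underlying argument is identical.
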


\begin{proof}
If $\lambda=1$, then from Eqs. \eqref{EigEq1} and  \eqref{EigEq2} we obtain $\mathcal{A}u=0$ which is a contradiction, since $u\neq 0$ and the matrix $\mathcal{A}$ is nonsingular (\cite[Lemma 1.1]{Benzi-a1}).

If $\lambda=-1$, then from Eqs. \eqref{EigEq1} and  \eqref{EigEq2} it follows that $2\alpha x=0$ and $2\beta x=0$. Since, $\alpha,\beta>0$, we get $x=0$ and $y=0$. This is a contradiction, because $(x;y)$ is an eigenvector of $\mathcal{A}$.
\end{proof}


\begin{thm}
\label{th1}
Let $\lambda$ be an eigenvalue of the matrix $\Gamma$ and $\alpha , \beta >0$. Then $| {\lambda}|<1$.
\end{thm}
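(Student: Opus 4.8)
The plan is to recognize this splitting as a genuine shift-splitting and to convert the one-sided spectral condition on $\Gamma_{\alpha,\beta}$ into a statement about a weighted field of values of $\mathcal{A}$, which a Cayley transform then turns into the bound. The starting observation is that $\mathcal{M}_{\alpha,\beta}+\mathcal{N}_{\alpha,\beta}=D$, where $D=\mathrm{diag}(\alpha I,\beta I)$ is symmetric positive definite, while $\mathcal{M}_{\alpha,\beta}-\mathcal{N}_{\alpha,\beta}=\mathcal{A}$. Hence the generalized eigenrelation $\mathcal{N}_{\alpha,\beta}u=\lambda\mathcal{M}_{\alpha,\beta}u$ is equivalent to $(1-\lambda)Du=(1+\lambda)\mathcal{A}u$. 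By Lemma~\ref{Lem1} we have $\lambda\neq-1$, so dividing by $1+\lambda$ gives $\mathcal{A}u=\mu Du$ with $\mu=\frac{1-\lambda}{1+\lambda}$, equivalently $\lambda=\frac{1-\mu}{1+\mu}$. Since $|\lambda|^2=\frac{|1-\mu|^2}{|1+\mu|^2}$ (the denominator is nonzero because $\mu=-1$ is impossible), the desired inequality $|\lambda|<1$ is exactly equivalent to $\mathrm{Re}(\mu)>0$.

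First I would premultiply $\mathcal{A}u=\mu Du$ by $u^\ast=(x^\ast,y^\ast)$ to obtain $\mu=\frac{u^\ast\mathcal{A}u}{u^\ast Du}$, which is legitimate because $D\succ0$ and $u\neq0$ force $u^\ast Du=\alpha\|x\|^2+\beta\|y\|^2>0$. Taking real parts and using that the symmetric part of $\mathcal{A}$ is $\mathrm{diag}(A,C)$ (the off-diagonal blocks $B^T$ and $-B$ cancel in $\tfrac12(\mathcal{A}+\mathcal{A}^T)$), one gets $\mathrm{Re}(u^\ast\mathcal{A}u)=x^\ast Ax+y^\ast Cy$. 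Therefore $\mathrm{Re}(\mu)=\frac{x^\ast Ax+y^\ast Cy}{\alpha\|x\|^2+\beta\|y\|^2}$, and the whole theorem reduces to showing that this numerator is strictly positive.

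The delicate point, and the step I expect to be the crux, is that $C$ is only positive semidefinite, so $y^\ast Cy$ may vanish; strict positivity of the numerator must then come entirely from $x^\ast Ax$, which is only useful once $x\neq0$ is known. To rule out $x=0$ I would read off the first block row of $\mathcal{A}u=\mu Du$, namely $Ax+B^Ty=\mu\alpha x$: if $x=0$ it forces $B^Ty=0$, and since $B$ has full row rank $B^T$ has trivial kernel, whence $y=0$, contradicting $u\neq0$. Thus $x\neq0$, and as $A$ is symmetric positive definite we get $x^\ast Ax>0$, so $x^\ast Ax+y^\ast Cy>0$ and $\mathrm{Re}(\mu)>0$. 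Unwinding the Cayley transform then yields $|\lambda|^2=\frac{|1-\mu|^2}{|1+\mu|^2}<1$, i.e. $|\lambda|<1$, completing the argument. The same conclusion can be reached more laboriously by multiplying \eqref{EigEq1} by $x^\ast$ and \eqref{EigEq2} by $y^\ast$ and separating real and imaginary parts, but the weighted field-of-values formulation makes the role of each hypothesis ($A\succ0$, $C\succeq0$, $B$ of full rank, and $\alpha,\beta>0$) transparent.
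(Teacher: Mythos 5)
Your proof is correct, and it takes a genuinely different, cleaner route than the paper's. Both arguments hinge on the Cayley transform $\omega=\frac{1-\lambda}{1+\lambda}$ and on proving $\Re(\omega)>0$, and both rule out $x=0$ in the same way (Lemma \ref{Lem1} plus full rank of $B$); the difference is in how the positivity is reached. The paper proceeds equation by equation: it normalizes $\|x\|_2=1$, premultiplies \eqref{EigEq1} by $x^*$, splits into the cases $Bx=0$ and $Bx\neq 0$, and in the latter case solves \eqref{EigEq2} for $Bx$ and substitutes, obtaining the scalar identity $\alpha\omega+\beta q\overline{\omega}=p+r$ (with $p=x^*Ax$, $q=y^*y$, $r=y^*Cy$) and hence $\Re(\omega)=(p+r)/(\alpha+\beta q)>0$. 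You instead observe that $\mathcal{M}_{\alpha,\beta}+\mathcal{N}_{\alpha,\beta}=D=\mathrm{diag}(\alpha I,\beta I)\succ 0$ while $\mathcal{M}_{\alpha,\beta}-\mathcal{N}_{\alpha,\beta}=\mathcal{A}$, recast the eigenproblem as $\mathcal{A}u=\mu Du$, and compute $\Re(\mu)=\Re(u^*\mathcal{A}u)/(u^*Du)=(x^*Ax+y^*Cy)/(\alpha\|x\|_2^2+\beta\|y\|_2^2)$, the skew-symmetric part of $\mathcal{A}$ contributing nothing to the real part. This removes the paper's normalization, case analysis, and substitution entirely---your single formula for $\Re(\mu)$ covers both of the paper's cases---and it isolates exactly where each hypothesis enters: $A\succ 0$ together with $x\neq 0$ gives strict positivity of the numerator, $C\succeq 0$ keeps it nonnegative, and $\alpha,\beta>0$ makes the denominator positive. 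It is also the argument that generalizes: it is the standard field-of-values proof that a shift-splitting of any matrix with positive semidefinite symmetric part is convergent (given the appropriate nondegeneracy), whereas the paper's manipulations are tied to the particular $2\times 2$ block structure. Two small points worth making explicit in your write-up: your parenthetical claim that $\mu\neq -1$ deserves its one-line reason, namely $1+\mu=\frac{2}{1+\lambda}\neq 0$ because $\lambda\neq -1$; and note that you only use the $\lambda\neq -1$ half of Lemma \ref{Lem1}, since $\lambda\neq 1$ comes out for free once $\Re(\mu)>0$ is established.
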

\begin{proof}
We first show that $x\neq 0$. If $x=0$, then it follows from Eq. \eqref{EigEq1} that $(1+\lambda ) B^T y=0$. Therefore, from Lemma \ref{Lem1} we conclude that $B^Ty=0$ and this yields $y=0$, since $B$ has full rank. This is a contradiction because $(x;y)$ is an eigenvector of $\Gamma_{\alpha,\beta}$.

Without loss of generality let $\norm{x}_2=1$. Multiplying both sides of \eqref{EigEq1} by $x^*$ yields
\begin{equation}\label{EigEq3}
\alpha-x^*Ax -(Bx)^*y=\lambda (\alpha \norm{x}_2^2+x^*Ax)+\lambda (Bx)^*y.
\end{equation}
We consider two cases $Bx=0$ and $Bx \neq 0$. If $Bx=0$, then Eq. \eqref{EigEq3} implies
\begin{eqnarray*}
|\lambda| =\frac{|\alpha  -x^*Ax|}{|\alpha +x^*Ax|}<1.
 \end{eqnarray*}
We now assume that $Bx \neq 0$. In this case, from Eq. \eqref{EigEq2} we obtain
\begin{equation}\label{EigEq4}
 Bx=\frac{\beta (\lambda-1)}{\lambda+1}y+Cy.
 \end{equation}
Substituting Eq. \eqref{EigEq4} in \eqref{EigEq3} yields
 \begin{equation*}
 (1-\lambda) \alpha -(1+\lambda)x^*Ax=(1+\lambda)\left( \beta \frac{ \overline{\lambda}-1}{1+\overline{\lambda}}y^*y+y^*Cy\right).
 \end{equation*}
Letting $p=x^*Ax$, $q=y^*y$, and $r=y^*Cy$, it follows from the latter equation that
\begin{equation}\label{Eqwwb}
\alpha \omega + \beta q \overline{\omega}=p+r,\quad \textrm{with}\quad \omega=\frac{1-\lambda}{1+\lambda}.
\end{equation}
Since $\alpha,\beta,p>0$ and $ q,r\geq 0$, form \eqref{Eqwwb} we see that
\[
\Re(w)=\frac{p+r}{\alpha+\beta q}>0.
\]
Hence, we have
\[
|\lambda|=\frac{|1-\omega|}{|1+\omega|}=\sqrt { \frac{ (1-\Re(\omega))^2+\Im(\omega)^2 }{ (1+\Re(\omega))^2+\Im(\omega)^2 } }<1
\]
which completes the proof.
\end{proof}


\begin{remark}
Let $C=0$. If $\alpha=\beta>0$ then MGSS iterative method is reduced to the shift-splitting method presented by Cao et al. in \cite{SSP} and when $\alpha$ and $\beta$ are two positive parameters the method becomes  the generalized shif-splitting method proposed by Chen and Ma in \cite{GSS}.
\end{remark}

Theorem \ref{th1} guarantees the convergence of the MGSS method, however  the stationary iterative method \eqref{MGHSSit} is typically too slow  for the method to be competitive. Nevertheless, it serves the preconditioner $\mathcal{P}_{MGSS}=\mathcal{M}_{\alpha,\beta}$ for a Krylov subspace method such as GMRES, or its restarted version GMRES($m$) to solve system \eqref{sdpr1}. At each step of the MGSS iterative method or applying the shift-splitting preconditioner $\mathcal{P}_{MGSS}$ within a Krylov subspace  method, we need to compute a vector of the form $z=\mathcal{P}_{MGSS}^{-1}r$ for a given $r=(r_1;r_2)$ where $r_1\in \Bbb{R}^n$ and $r_2\in \Bbb{R}^m$.  It is not difficult to check that
\begin{align*}
\mathcal{P}_{MGSS}&= \frac{1}{2} \left( {\begin{array}{*{20}{c}}
I &B^T \left(\beta I+C \right) ^{-1}\\
0 & I
\end{array}} \right)
\left( {\begin{array}{*{20}{c}}
S  & 0 \\
 0 & \beta I+C
\end{array}} \right)
\left( {\begin{array}{*{20}{c}}
I & 0 \\
- \left( \beta I+C \right)^{-1} B & I
\end{array}} \right),
\end{align*}
where $ S= \alpha I+ A+B^T \left( \beta I+ C \right)^{-1}B$. Hence,
\begin{align}\label{Pinv}
\mathcal{P}_{MGSS}^{-1}&= 2
 \left( {\begin{array}{*{20}{c}}
I & 0 \\
\left( \beta I+C \right)^{-1} B & I
\end{array}} \right)
\left( {\begin{array}{*{20}{c}}
S^{-1}  & 0 \\
 0 & ( \beta I+C)^{-1}
\end{array}} \right)
\left( {\begin{array}{*{20}{c}}
I & -B^T \left(\beta I+C \right) ^{-1}\\
0 & I
\end{array}} \right).
\end{align}
By using Eq. \eqref{Pinv} we state Algorithm  \ref{Alg1} to compute the vector $z=(z_1;z_2)$ where $z_1\in \Bbb{R}^n$ and $z_2\in \Bbb{R}^m$ as following.

\begin{alg}\rm\label{Alg1}
Computation of $z=\mathcal{P}_{MGSS}^{-1}r$. \\[-5mm]
\begin{enumerate}
\item Solve $(\beta I +C)w=2r_2$ for $w$. \\[-6mm]
\item Compute $w_1=2r_1-B^Tw$. \\[-6mm]
\item Solve $\left( \alpha I+ A+B^T(\beta I+C)^{-1}B\right)z_1=w_1$ for $z_1$. \\[-6mm]
\item Solve  $(\beta I +C)v=B z_1$ for $v$.\\[-6mm]
\item Compute $z_2=v+w$.
\end{enumerate}
\end{alg}

Obviously, the matrix $S=\alpha I+ A+B^T(\beta I+C)^{-1}B$ is SPD. In practical implementation of Algorithm \ref{Alg1} one may use the conjugate gradient (CG) method or a preconditioned CG (PCG) method to solve the system of  Step 3. It is noted that, since the matrix $\beta I+ C$ is SPD and of small size in comparison to the size of $A$, we use the Cholesky factorization of $\beta I+ C$ in Steps 1, 3 and 4.

In the sequel, we consider the relaxed  MGSS (RMGSS)  preconditioner
\[
\mathcal{P}_{RMGSS}=
\left( {\begin{array}{*{20}{c}}
A & B^T\\
-B & \beta I+ C
\end{array}} \right).
\]
for the saddle point  problem \eqref{sdpr1}. The next theorem discusses eigenvalues distribution of  $\mathcal{P}_{RMGSS}^{-1}\mathcal{A}$.

\begin{thm}\label{th2}
The preconditioned matrix $\Psi=\mathcal{P}_{RMGSS}^{-1}\mathcal{A}$ has an eigenvalue 1 with multiplicity $n$ and the remaining eigenvalues are
$\lambda_i= \frac{\mu_i}{\beta+\mu_i}$,  $1 \leq i \leq m$, where $\mu_i$'s are the  eigenvalues of the matrix $G=C+B A ^{-1} B^T$.
\end{thm}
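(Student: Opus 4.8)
The plan is to analyze the generalized eigenvalue problem $\mathcal{A}u=\lambda\,\mathcal{P}_{RMGSS}u$, since $\lambda$ is an eigenvalue of $\Psi=\mathcal{P}_{RMGSS}^{-1}\mathcal{A}$ precisely when this holds for some $u=(x;y)\neq 0$ (note that $\mathcal{P}_{RMGSS}$ is nonsingular, being a saddle point matrix with SPD $(1,1)$-block $A$ and SPD $(2,2)$-block $\beta I+C$, so $\Psi$ is well defined). Expanding the problem blockwise, the top block reads $Ax+B^Ty=\lambda(Ax+B^Ty)$ and the bottom block reads $-Bx+Cy=\lambda(-Bx+(\beta I+C)y)$. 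The first equation factors as $(1-\lambda)(Ax+B^Ty)=0$, which immediately separates the analysis into the cases $\lambda=1$ and $\lambda\neq 1$.

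For $\lambda=1$ I would substitute into the bottom block to obtain $\beta y=0$, hence $y=0$ because $\beta>0$; conversely, every vector of the form $(x;0)$ satisfies both block equations with $\lambda=1$. This shows that the eigenspace associated with $\lambda=1$ is exactly $\{(x;0):x\in\mathbb{R}^n\}$, which has dimension $n$, so $\lambda=1$ is an eigenvalue of geometric multiplicity $n$. For $\lambda\neq 1$ the first equation forces $Ax+B^Ty=0$, and since $A$ is SPD (hence invertible) this gives $x=-A^{-1}B^Ty$. Substituting into the bottom block and collecting terms, I would arrive at $Gy=\lambda(G+\beta I)y$ with $G=C+BA^{-1}B^T$. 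Here $y\neq 0$ (otherwise $x=0$ too, contradicting $u\neq 0$), so $\lambda$ and $y$ solve the reduced $m\times m$ generalized eigenproblem; if $\mu_i$ is an eigenvalue of $G$ with eigenvector $y_i$, then $(G+\beta I)y_i=(\mu_i+\beta)y_i$ and one reads off $\lambda_i=\mu_i/(\mu_i+\beta)$.

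The only point requiring genuine care is the bookkeeping of multiplicities, which is where I expect the main (if mild) obstacle. I would first observe that $G$ is SPD: the matrix $BA^{-1}B^T$ is SPD because $B$ has full rank and $A^{-1}$ is SPD, while $C$ is SPSD, so $G$ is SPD and all its eigenvalues satisfy $\mu_i>0$. Consequently each $\lambda_i=\mu_i/(\mu_i+\beta)$ lies in $(0,1)$ and in particular $\lambda_i\neq 1$. Counting the $n$ linearly independent eigenvectors for $\lambda=1$ together with the $m$ eigenvalues produced by the reduced problem exhausts all $n+m$ eigenvalues of $\Psi$; since none of the latter equals $1$, the algebraic multiplicity of the eigenvalue $1$ is exactly $n$, which completes the proof.
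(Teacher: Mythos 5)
Your proof is correct, but it takes a genuinely different route from the paper. The paper works multiplicatively: it takes the block factorization of $\mathcal{P}_{MGSS}^{-1}$ in Eq.~\eqref{Pinv}, specializes it to $\alpha=0$ (dropping the factor $2$), and multiplies out $\Psi=\mathcal{P}_{RMGSS}^{-1}\mathcal{A}$ explicitly, obtaining a block upper triangular matrix whose $(1,1)$ block is $I_n$ and whose $(2,2)$ block is $S^{-1}G$ with $S=\beta I+G$; the spectrum and, crucially, the \emph{algebraic} multiplicities then drop out at once, since the characteristic polynomial of a block triangular matrix factors as $(1-\lambda)^n\det\left(S^{-1}G-\lambda I\right)$. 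You instead analyze the generalized eigenproblem $\mathcal{A}u=\lambda\,\mathcal{P}_{RMGSS}u$ blockwise and split on $\lambda=1$ versus $\lambda\neq1$, reducing the latter case to the $m\times m$ pencil $Gy=\lambda(G+\beta I)y$. Your approach is more elementary (it never needs the LDU factorization formula) and yields extra information the paper does not state: the $\lambda=1$ eigenspace is exactly $\{(x;0)\}$, the remaining eigenvectors are the lifts $(-A^{-1}B^Ty_i;\,y_i)$, and $\Psi$ is in fact diagonalizable. The price is exactly the point you flagged: passing from geometric to algebraic multiplicity requires the counting argument (the $n$ eigenvectors for $\lambda=1$ plus the $m$ lifted eigenvectors of the symmetric, hence diagonalizable, matrix $G$ are jointly linearly independent, so $\Psi$ has $n+m$ independent eigenvectors and algebraic multiplicities equal geometric ones). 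You state this a little informally, and it would be worth spelling out that the lifts are independent of the $(x;0)$ space because their second components $y_i$ are independent; with that sentence added, your argument is complete, whereas the paper's triangularization gets the multiplicity bookkeeping for free.
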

\begin{proof}
By using Eq. \eqref{Pinv} (with $\alpha=0$ and neglecting the pre-factor 2) we obtain
\begin{align*}
\Psi=
\left( {\begin{array}{*{20}{c}}
I & A^{-1}B^T-A^{-1}B^T S^{-1}BA^{-1}B^{T}-A^{-1} B^T S^{-1}C \\
0 & S^{-1} G
\end{array}} \right)
\end{align*}
where $S=\beta I +G$. Therefore, $\Psi$  has an eigenvalue 1 with multiplicity $n$ and the remaining eigenvalues are  the eigenvalues of $T=\left( \beta I + G\right)^{-1}G$ and this completes the proof.
\end{proof}

\begin{remark}
 Similar to Theorem 3.2 in \cite{SSP} it can be shown that the dimension of the Krylov subspace $\mathcal{K}(\Psi,b)$ is at most $m+1$. This shows that the GMRES iterative method to solve \eqref{sdpr1} in conjunction with the preconditioner $\mathcal{P}_{RMGSS}^{-1}$ terminates in most $m+1$ iterations and provides the exact solution of the system (see Proposition 6.2 in \cite{Saadbook}). Obviously, the matrix $G$ is SPD and as a result its eigenvalues are positive. Therefore, from Theorem \ref{th2} we see that $|\lambda_i-1|=0$ or $|\lambda_i-1|=\frac{\beta}{\beta+\mu_i}$. Hence, the eigenvalues of $\Psi$ would be well clustered with a nice clustering of its eigenvalues around the point $(1,0)$ for small values of $\beta$. In this case, the matrix $\Psi$ would be well conditioned.
 \end{remark}

\section{Numerical Experiments} \label{Sec3}

In this section, some numerical experiments are given to show the effectiveness of the   MGSS and RMGSS preconditioners.
All the numerical experiments presented in this section were computed in double precision using some MATLAB codes on a Laptop with Intel Core i7 CPU 1.8 GHz, 6GB RAM. We consider the Stokes problem (see \cite[page 221]{Elman})
 \begin{equation}\label{ExSDP}
\left\{\begin{array}{ll}
-\triangle \textbf{u}+\nabla p=\textbf{f}, \\
\hspace{1.1cm}\nabla . \textbf{u}=0, \\
\end{array}\right.
\end{equation}
in $\Omega=[-1,1] \times [-1,1]$, with the exact solution
\[
\textbf{u}=(20xy^3,5x^4-5y^4),\quad p=60x^2y-20y^3+\textrm{constant}.
\]
We use the interpolant of $\textbf{u}$ for specifying Dirichlet conditions everywhere on the boundary. The test problems  were generated by using the IFISS software package written by Elman et al. \cite{prec1}. The IFISS package were used to discretize the problem \eqref{ExSDP} using stabilized Q1-P0 finite elements. We used $\beta=0.25$ as the stabilization parameter. Matrix properties of the test problem for different sizes are given in Table \ref{Tbla}.

We use GMRES($5$) in conjunction with the preconditioners $\mathcal{P}_{MGSS}$ and $\mathcal{P}_{RMGSS}$. We also compare the results of the MGSS and RMGSS preconditioners with those of the Hermitian and skew-Hermitian (HSS) preconditioner (see \cite{Benzi-a1}). To show the effectiveness of the methods we also give the results of  GMRES(5) without preconditioning.  We use a null vector as an initial guess and the  stopping criterion $ \|b-Ax^{(k)}\|_2< 10^{-9}\| b\|_2$. In the implementation  of the preconditioners $\mathcal{P}_{MGSS}$ and $\mathcal{P}_{RMGSS}$, in Algorithm 1,  we use the Cholesky factorization of $\beta I+C$ and the CG method to solve the system of Step 3. It is noted that, in the CG method, the iteration is terminated when the residual norm is reduced by a factor of 100 or when the number of iterations exceeds 40.  Numerical results are given in Table \ref{Tbl} for different sizes of the problem. In this table ``IT" denotes for the number of iterations for the convergence and ``CPU" stands for the corresponding CPU times (in seconds). For the HSS preconditioner we experimentally computed the optimal value of the involving parameter (see \cite{Benzi-a1}) of the method.  For the MGSS method we present the numerical results for $(\alpha,\beta)=(0.01,0.001)$ and $(\alpha,\beta)=(0.001,0.001)$  and in the RMGSS method for  $\beta=0.001$. As the numerical results show the all the preconditioners are effective. We also observe that the MGSS and RMGSS preconditioners are superior to the HSS preconditioners in terms of both iteration count and CPU times. For more  investigation the eigenvalues distribution of the matrices $\mathcal{A}$, $\mathcal{P}_{MGSS}^{-1}\mathcal{A}$ with $\alpha=\beta=0.001$ and  $\mathcal{P}_{RMGSS}^{-1}\mathcal{A}$ with $\beta=0.001$  are displayed in Figure 1. As we see the eigenvalues of  $\mathcal{P}_{MGSS}^{-1}\mathcal{A}$ and  $\mathcal{P}_{RMGSS}^{-1}\mathcal{A}$ are more clustered than the matrix $\mathcal{A}$.

\section{Conclusion}\label{Sec4}

We have presented a modification of the generalized shift-splitting method to solve the saddle point problem with symmetric positive definite (1,1)-block and symmetric positive semidefinite (2,2)-block. Then the resulted preconditioner and its relaxed version have been implemented to precondition the saddle point problem. We have seen that both of the preconditioners are effective when they are combined with the GMRES(m) algorithm. Our numerical results show that the proposed preconditioners are more effective than the HSS preconditioner.

\begin{table}
 \caption{Matrix properties of the test problem. \label{Tbla}}
 \centering
\begin{tabular}{ccccccccccccccc}\\ \hline    
Grid             & $n$ & $m$ & nnz(A) &  nnz(B) & nnz(C) \\ \hline
$16 \times 16$   & 578   & 256  & 3826  & 1800  & 768\\[1mm]
$32 \times 32$   & 2178  & 1024 & 16818 & 7688  & 3072\\[1mm]
$64 \times 64$   & 8450  & 4096 & 70450 & 31752 & 12288\\[1mm]
$128 \times 128$ & 33282 & 16384& 288306 & 129032 & 49152\\ \hline
\end{tabular}
\label{Tbl}
\end{table}

\begin{table}
 \caption{Numerical results for the test problem. \label{Table2}}
{\scriptsize
\begin{tabular}{ccccccccccccccc}\\ \hline \\   
               & GMRES(5)\hspace{-1cm}  & & & & MGSS &   & && RMGSS & & && HSS &  \\ \cline{2-3} \cline{5-7} \cline{9-11}  \cline{13-15} \\[0mm]
Grid           &  IT   &  CPU     & &($\alpha$,$\beta$) & IT & CPU && $\beta$ & IT & CPU & & $\alpha$ & IT & CPU  \vspace{-0.0cm} \\ \hline \\[1mm]
$16 \times 16$ &18 & 0.119  & & $(0.01,0.001)$  & 6  & 0.048    &&  0.001 & 6  & 0.048  && 0.085  &  12 & 0.051\\[1mm]
               &   &      & &  $(0.001,0.001)$ & 6  & 0.048    &&        &    &        &&        &     &      \\[2mm]

$32 \times 32$ &33 & 0.530      & &  $(0.01,0.001)$  & 6  & 0.152    &&  0.001 & 5  & 0.145  && 0.050  &  18 & 0.199\\[1mm]
               &   &      & &  $(0.001,0.001)$ & 6  & 0.153    &&        &    &        &&        &     &      \\[2mm]

$64 \times 64$ &147 & 8.066      & &  $(0.01,0.001)$  & 14 & 1.305    &&  0.001 & 7  & 1.021  && 0.020  &  27 & 2.449\\[1mm]
               &    &      & &  $(0.001,0.001)$ & 7  & 0.905    &&        &    &        &&        &     &      \\[2mm]

$128\times 128$&349 & 76.9  & &  $(0.01,0.001)$  & 27 & 10.492   &&  0.001 & 15 & 10.016 && 0.020  &  41 & 24.633\\[1mm]
               &    &     & &  $(0.001,0.001)$ & 14 & 9.284    &&        &    &        &&        &     &      \\ \hline
\end{tabular}}
\label{Tbl}
\end{table}

\begin{figure}
\centerline{\includegraphics[height=5cm,width=5.5cm]{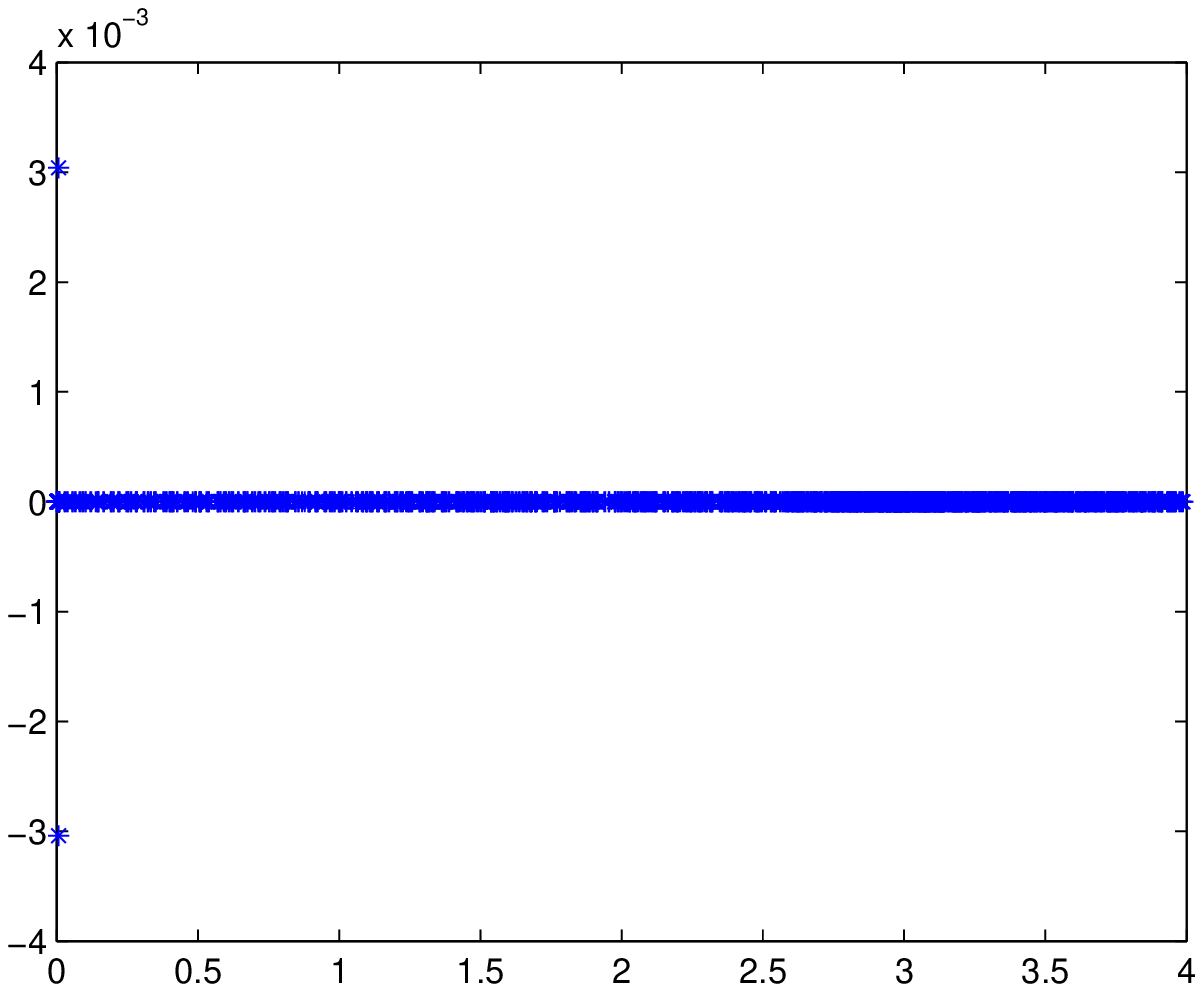}\includegraphics[height=5cm,width=5.5cm]{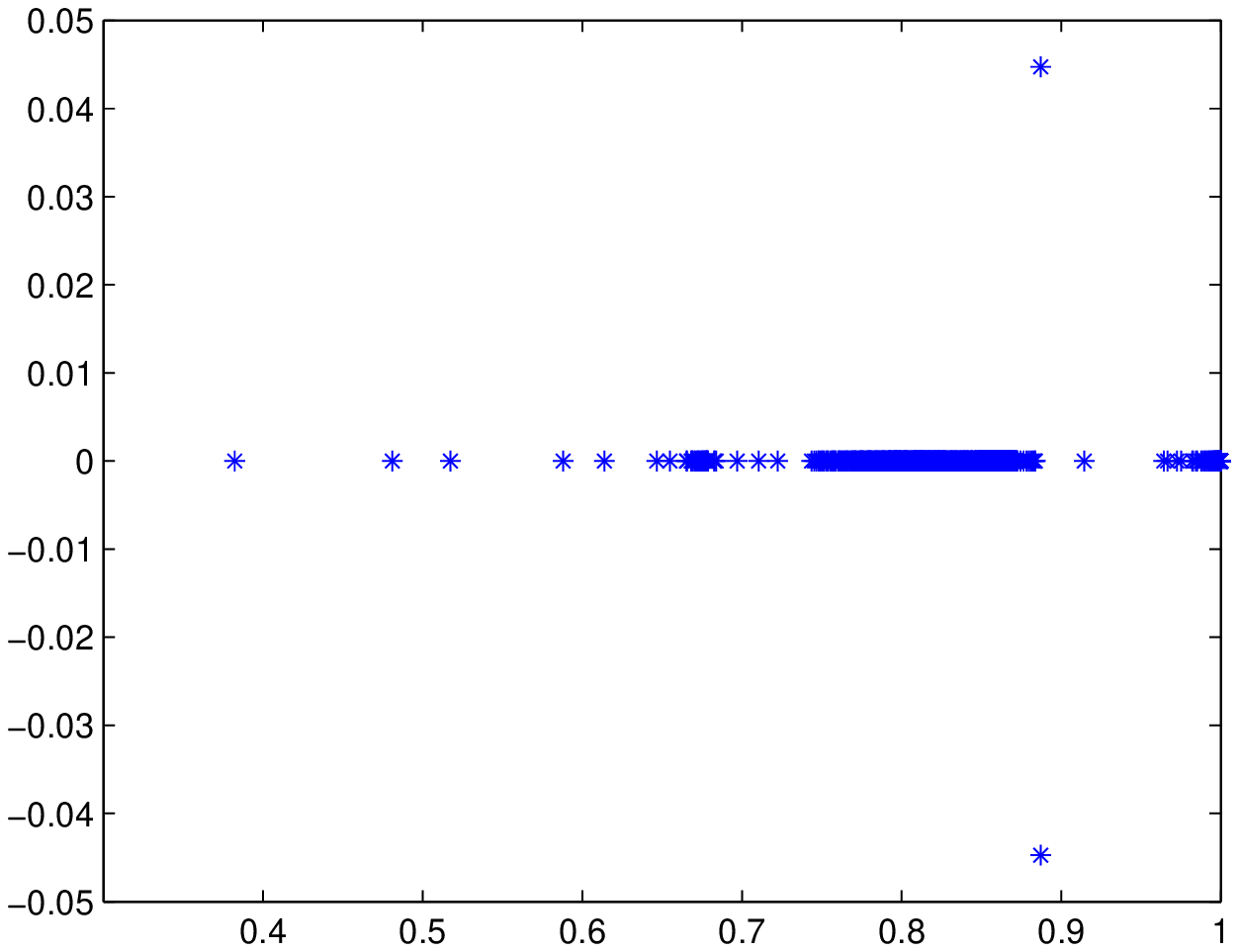}\includegraphics[height=5cm,width=5.5cm]{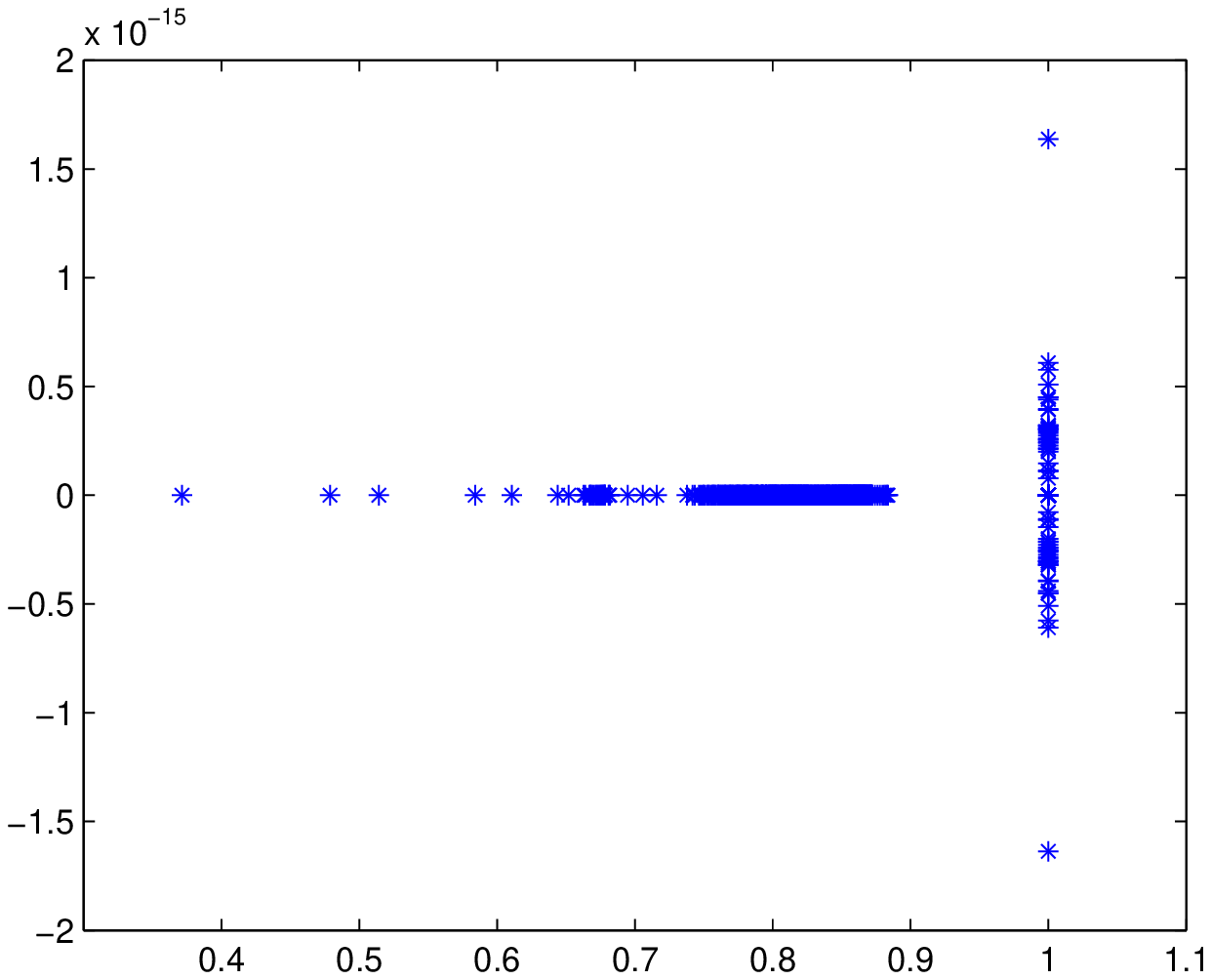}}
{\caption{Eigenvalues distribution of  the saddle point matrix $\mathcal{A}$ (left) and the preconditioned matrix, $\mathcal{P}_{MGSS}^{-1}\mathcal{A}$ where $\alpha=\beta=0.001$ (middle) and $\mathcal{P}_{RMGSS}^{-1}\mathcal{A}$ where $\beta=0.001$ (right)  with $m=32$.}}
\label{Fig}
\end{figure}

 \section*{Acknowledgements}
The authors are grateful to the anonymous referees  for their valuable
comments and suggestions which improved the quality of this paper.

\enddocument
\begin{thebibliography}{99}
\bibitem{Axel} O. Axelsson and V.A. Barker, \emph{Finite Element Solution of Boundary Value Problems}, Academic Press, Orlando, FL, 1984. \vspace{-0.2cm}


\bibitem{Bai-HSS-like} Z. Z. Bai, \emph{Optimal parameters in the HSS-like methods for saddle-point problems}, Numer. Linear Algebra Appl. 16 (2009) 447-479.
\vspace{-0.2cm}

\bibitem{Bai-AHSS} Z. Z. Bai and G. H. Golub,\emph{ Accelerated Hermitian and skew-Hermitian splitting methods for saddle-point problems}, IMA J. Numer. Anal. 27 (2007) 1-23.
\vspace{-0.2cm}

\bibitem{Bai-HSS} Z. Z. Bai, G. H. Golub and M. K. Ng, \emph{Hermitian and skew-Hermitian splitting methods for
non-Hermitian positive definite linear systems}, SIAM J. Matrix Anal. Appl. 24 (2003) 603-626.
\vspace{-0.2cm}

\bibitem{Bai-PHSS} Z. Z. Bai, G. H. Golub and J. Y. Pan, \emph{Preconditioned Hermitian and skew-Hermitian splitting methods for non-Hermitian positive semidefinite linear systems}, Numer. Math. 98 (2004) 1-32.
\vspace{-0.2cm}


 \bibitem{Uzawa1} Z. Z. Bai, B. Parlett and Z. Q. Wang,\emph{ On generalized successive overrelaxation methods for augmented linear systems},
Numer. Math. 102 (2005) 1-38.
\vspace{-0.2cm}

 \bibitem{Uzawa2}  Z. Z. Bai and Z. Q. Wang, \emph{ On parameterized inexact Uzawa methods for generalized saddle point problems}, Linear
Algebra Appl. 428 (2008) 2900-2932.
\vspace{-0.2cm}

\bibitem{Bai-PSS} Z. Z. Bai, J. F. Yin and Y. F. Su,\emph{ A shift-splitting preconditioner for non-Hermitian positive definite matrics}, J.
Comput. Math. 24 (2006) 539-552.
\vspace{-0.2cm}

\bibitem{Benzi-a1} M. Benzi and G.H. Golub,\emph{ A preconditioner for generalized saddle point problems}, SIAM J. Matrix Anal. Appl.
26 (2004) 20-41.
\vspace{-0.2cm}

\bibitem{Benzi-a} M. Benzi, G.H. Golub and J. Liesen, \emph{Numerical solution of saddle point problems}, Acta Numer. 14 (2005) 1-137.
\vspace{-0.2cm}

\bibitem{prec3} M. Benzi and X.P. Guo,\emph{ A dimensional split preconditioner for Stokes and linearized Navier-Stokes equations},
Appl. Numer. Math. 61 (2011) 66-76.
\vspace{-0.2cm}

\bibitem{prec4} M. Benzi, M. K. Ng, Q. Niu and Z. Wang.\emph{ A relaxed dimensional fractorization preconditioner for the incompressible
Navier-Stokes equations}, J. Comput. Phys. 230 (2011) 6185-6202.
\vspace{-0.2cm}

\bibitem{Uzawa3} J. H. Bramble, J. E. Pasciak, and A. T. Vassilev, \emph{Analysis of the inexact Uzawa algorithm
for saddle point problems}, SIAM J. Numer. Anal. 34 (1997) 1072-1092.
\vspace{-0.2cm}

\bibitem{Brezzi}  F. Brezzi and M. Fortin, \emph{Mixed and Hybrid Finite Element Methods}, Springer, New York, 1991.
\vspace{-0.2cm}

\bibitem{SSP} Y. Cao, J. Du and Q. Niu,\emph{ Shift-splitting preconditioners for saddle point problems}, J. Comput. Appl. Math.
272 (2014) 239-250.
\vspace{-0.2cm}

 \bibitem{Uzawa4} Y. Cao, M. Q. Jiang and Y. L. Zheng,\emph{ A splitting preconditioner for saddle point problems}, Numer. Linear Algebra
Appl. 18 (2011) 875-895.
\vspace{-0.2cm}



 \bibitem{GSS} C. Chen and C. Ma, \emph{A generalized shift-splitting preconditioner for saddle point problems}, Appl. Math. Lett. 43 (2015) 49-55.
\vspace{-0.2cm}

\bibitem{ding1} F. Ding, P. X. Liu and J. Ding,  \emph{Iterative solutions of the generalized Sylvester matrix equations by using the hierarchical identification principle}, Appl. Math. Comput. 197 (2008) 41-50.

\bibitem{ding2} F. Ding and T. Chen, \emph{Gradient based iterative algorithms for solving a class of matrix equations}, IEEE Trans. Automat. Control. 50(2005) 1216-1221.

\bibitem{ding3} F. Ding and T. Chen, \emph{On iterative solutions of general coupled matrix equations}, SIAM J. Control Optim. 44(2006) 2269-2284.

\bibitem{ding4} F. Ding and H. Zhang, \emph{Gradient-based iterative algorithm for a class of the coupled matrix equations related to control systems}, IET Control Theory Appl. 8(2014) 1588-1595.

\bibitem{Elman1}  H. Elman, \emph{Multigrid and Krylov subspace methods for the discrete Stokes equations}, Internat. J. Numer. Methods Fluids. 22 (1996) 755-770.

\bibitem{Uzawa6}   H. C. Elman and G. H. Golub, \emph{Inexact and preconditioned Uzawa algorithms for saddle point
problems}, SIAM J. Numer. Anal. 31 (1994) 1645-1661.
\vspace{-0.2cm}

\bibitem{ifiss} H.C. Elman, A. Ramage, D.J. Silvester, \emph{IFISS: A Matlab toolbox for modelling incompressible flow}, ACM Trans. Math. Software. 33 (2007), Article 14.

\bibitem{Elman}  H. C. Elman, D. J. Silvester and A.J. Wathen, \emph{ Finite Elements and Fast Iterative Solvers}, Oxford University Press, Oxford, 2003.
\vspace{-0.2cm}


\bibitem{Klawonn} A. Klawonn, \emph{Block-triangular preconditioners for saddle-point problems with a penalty term}, SIAM J. Sci. Comput. 19 (1998) 172-184.


\bibitem{Saadbook} Y. Saad, \textit{ Iterative Methods for Sparse Linear Systems}, SIAM, Philadelphia, 2003.

\bibitem{Silvester} D. Silvester and A. Wathen, \emph{Fast iterative solution of stabilized Stokes systems, Part II: Using general block preconditioners}, SIAM J. Numer. Anal. 31 (1994) 1352-1367.

\bibitem{wu1} S.-L. Wu, T.-Z. Huang and C.-X. Li, \emph{Generalized block triangular preconditioner for symmetric saddle point problems}, Computing. 84 (2009) 183-208.

\bibitem{wu2} S.-L. Wu, T.-Z. Huang and C.-X. Li, \emph{Modified block preconditioners for the discretized time-harmonic Maxwell equations in mixed form}, J. Comput. Appl. Math. 237 (2013) 419-431.

\bibitem{wu3} S.-L. Wu, L. Bergamaschi and C.-X. Li, \emph{A note on eigenvalue
distribution of constraint-preconditioned symmetric saddle point matrices},
Numer. Linear Algebra Appl. 21 (2014) 171-174.

\bibitem{wu4} S.-L. Wu and C.-X. Li, \emph{Eigenvalue estimates of an indefinite block triangular preconditioner for
saddle point problem}, J. Comput. Appl. Math. 260 (2014) 349-355.
\end{thebibliography}
